\newtheorem{thm}{Theorem}[section]
\newtheorem{lem}[thm]{Lemma}
\newtheorem{prop}[thm]{Proposition}
\theoremstyle{definition}
\newtheorem{defn}[thm]{Definition}
\theoremstyle{remark}
\newtheorem{rem}[thm]{Remark}
\numberwithin{equation}{section}
\newcommand{\al}{\alpha}
\newcommand{\be}{\beta}
\newcommand{\ga}{\gamma}
\newcommand{\Ga}{\Gamma}
\newcommand{\de}{\delta}
\newcommand{\ep}{\varepsilon}
\newcommand{\ka}{\kappa}
\newcommand{\si}{\sigma}
\newcommand{\Si}{\Sigma}
\newcommand{\va}{\varphi}
\newcommand{\csi}{\xi}
\newcommand{\x}{\times}
\newcommand{\CC}{\mathcal C}
\renewcommand{\SS}{{\mathfrak {F}}}
\newcommand{\bb}{{\mathfrak {b}}}
\renewcommand{\i}{{\rm I}}
\newcommand{\ii}{{\rm II}}
\newcommand{\iii}{{\rm III}}
\newcommand{\iv}{{\rm IV}}
\newcommand{\imm}{{\mathrm {Imm}}}
\newcommand{\Z}{\mathbb Z}
\newcommand{\R}{\mathbb R}
\newcommand{\RP}{{\mathbb R}{P}}
\newcommand{\CP}{{\mathbb C}{P}}
\newcommand{\del}{\partial}
\newcommand{\co}{\colon\thinspace}
\newcommand{\plto}[1]
 {\xrightarrow{#1}}
\begin{document}
\mathsurround=1pt 
\title{Cobordisms of fold maps of $4$-manifolds into the space}

\thanks{
2000 {\it Mathematics Subject Classification.} Primary 57R45; Secondary 57R75, 57R42.\\
{\it Key words and phrases.} Fold singularity, fold map, immersion, cobordism, $4$-manifolds}   

\thanks{The author has been supported by JSPS}

\author{Boldizs\'ar Kalm\'{a}r}

\address{Kyushu University, Faculty of Mathematics, 6-10-1 Hakozaki, Higashi-ku, Fukuoka 812-8581, Japan}
\email{kalmbold@yahoo.com}


\begin{abstract}
We compute the oriented cobordism group of fold maps of $4$-manifolds into $\R^3$ with all the possible restrictions (and also with no restriction) to the singular fibers. We also give geometric invariants which describe completely the cobordism group of fold maps. 
\end{abstract}

\maketitle

\section{Introduction}

Folds maps on oriented $4$-manifolds into $\R^3$ can be considered as additional structures on $4$-manifolds like framings of the stable tangent bundle of the $4$-manifold \cite{An3}, surfaces with special properties embedded into the $4$-manifold  \cite{Sa5}, and conditions about the (co)homologies of the $4$-manifold \cite{Sad, Sa5}. Like in the case of other additional structures, e.g. spin structures, one can define a corresponding notion of cobordism, i.e. cobordism of fold maps of $4$-manifolds into $\R^3$. Regarding the applications of cobordisms of $4$-manifolds equipped with additional structures, it seems to be useful to consider the cobordisms of fold maps of $4$-manifolds into $\R^3$ with prescribed singular fibers. Computing these cobordism groups is the goal of the present paper.

In \cite{Kal2}, we computed the oriented cobordism group of fold maps of $3$-manifolds into the plane, and by checking the values of the geometric cobordism invariants defined in \cite{Kal6, Kal7} on the generators, we obtain that the fold cobordism class of a fold map of a $3$-manifold into the plane is described by the cobordism classes of the immersions obtained by restricting the fold map to its definite and indefinite singular sets, respectively \cite{Kal6}. In this paper, we obtain that the fold cobordism class of a $4$-manifold into $\R^3$ is determined by the values of the geometric invariants corresponding to the singular set of the fold map, similarly to the $3$-dimensional case. Moreover, we obtain a clear and complete picture about the cobordism groups of fold maps of $4$-manifolds with prescribed singular fibers.  Some of the results of the present paper were obtained in \cite{Kal3}.
We proved \cite{Kal6} that the cobordism classes of simple fold maps of $(n+1)$-dimensional manifolds into an $n$-dimensional manifold are described by the geometric invariants defined in \cite{Kal6, Kal7}. 
Sadykov proved \cite{Sad4} that the cobordism classes of fold maps of $5$-manifolds into $\R^4$ are described by the geometric invariants defined in \cite{Kal6, Kal7}. 

We would like to thank to O. Saeki, R. Sadykov and T. Yamamoto for asking so much and
listening carefully to the author's talks on the present subject at seminars in the last 2 years.

\subsection{Notations}
In this paper the symbol ``$\amalg$'' denotes the disjoint union, 
$\ga^1$ denotes the universal line bundle over $\RP^{\infty}$,
$\ep^1_X$ (shortly $\ep^1$) denotes the trivial line bundle over the space $X$,
and the symbols $\csi^k$, $\eta^k$, etc. usually denote $k$-dimensional real vector bundles.
The symbols det$\csi^k$ and $T\csi^k$ denote the determinant line bundle 
and the Thom space of the bundle $\csi^k$, respectively.
The symbol $\imm^{\csi^k}_{N}(n-k,k)$ denotes
the cobordism group of $k$-codimensional immersions into 
an $n$-dimensional manifold $N$
whose normal bundles are induced from $\csi^k$ (this
group is isomorphic to the group $\{\dot N, T\csi^k \}$, where
$\dot N$ denotes the one point compactification of the manifold $N$
and the symbol $\{X,Y \}$ denotes the group of stable homotopy classes of continuous
maps from the space $X$ to the space $Y$).
The symbol $\imm^{\csi^k}(n-k,k)$ denotes
the cobordism group of $k$-codimensional immersions into $\R^n$ 
whose normal bundles are induced from $\csi^k$ (this
group is isomorphic to $\pi_{n}^s(T\csi^k)$).
The symbol $\imm_N(n-k,k)$ denotes
the cobordism group $\imm^{\ga^k}_N(n-k,k)$ where
$\ga^k$ is the universal bundle for $k$-dimensional real vector bundles and 
$N$ is an $n$-dimensional manifold.
The symbol $\pi_n^s(X)$ ($\pi_n^s$) denotes the $n$th stable homotopy group of the space $X$ (resp. spheres).
The symbol ``id$_A$'' denotes the identity map of the space $A$.
The symbol $\ep$ denotes a small positive number.
All manifolds and maps are smooth of class $C^{\infty}$.

\section{Preliminaries}

\subsection{Fold maps}

Let $Q^{n+1}$ and $N^n$ be smooth manifolds of dimensions $n+1$ and $n$ 
respectively. Let $p \in Q^{n+1}$ be a singular point of 
a smooth map $f \co Q^{n+1} \to N^{n}$. The smooth map $f$  has a {\it fold 
singularity} at the singular point $p$ if we can write $f$ in some local coordinates around $p$  
and $f(p)$ in the form 
\[  
f(x_1,\ldots,x_{n+1})=(x_1,\ldots,x_{n-1}, x_n^2 \pm x_{n+1}^2).
\] 
A smooth map $f \co Q^{n+1} \to N^{n}$ is called a {\it fold map} if $f$ has only 
fold singularities.

A smooth map $f \co Q^{n+1} \to N^n$ 
  has a {\it definite fold
singularity} at a fold singularity $p \in Q^{n+1}$ if we can write $f$ in some local coordinates around $p$  
and $f(p)$ in the form 
\[  
f(x_1,\ldots,x_{n+1})=(x_1,\ldots,x_{n-1},x_n^2 + x_{n+1}^2),
\] 
otherwise $f$ has an {\it indefinite fold singularity} at the fold singularity $p \in Q^{n+1}$.
Let $S_1(f)$ denote the set of indefinite fold singularities of $f$ in $Q^{n+1}$ and
 $S_0(f)$ denote the set of definite fold singularities of $f$ in $Q^{n+1}$.
 Let $S_f$ denote the set $S_0(f) \cup S_1(f)$.
Note that the set $S_f$ is an ${(n-1)}$-dimensional submanifold of the manifold
$Q^{n+1}$.
 Usually without mentioning we suppose that the source manifold $Q^{n+1}$ 
and the target manifold $N^{n}$ are 
oriented.
 

If $f \co Q^{n+1} \to N^n$ is a fold map in general position, then 
the map $f$
restricted to the singular set $S_f$ is a general positional
 codimension one immersion  into the target manifold $N^n$.

Since every fold map is in general position after a small perturbation, 
and we study maps under the equivalence relations {\it cobordism} 
(see Definitions~\ref{cobdef}),
in this paper we can restrict ourselves to studying fold maps which are 
in general position.
Without mentioning we suppose that a fold map $f$ is in general position.

\begin{defn}
For an integer $k > 0$ a 
fold map $f \co Q^{n+1} \to N^{n}$ is called a {\it $k$-simple fold map}
if every connected 
component of an arbitrary fiber of the map $f$ contains at most $k$ singular points. 
(For $k=1$ we say shortly {\it simple fold map}.)
\end{defn}

%

\subsection{Equivalence relations of fold maps}\label{kob}

\begin{defn}\label{cobdef} (Cobordism) 
Two fold maps $f_i \co Q_i^{n+1} \to N^n$ $(i=0,1)$  
of closed (oriented) $({n+1})$-dimensional manifolds $Q_i^{n+1}$ $(i=0,1)$ 
into an $n$-dimensional manifold $N^n$ are  
{\it (oriented) cobordant} if 
\begin{enumerate}[a)]
\item
there exists a fold map 
$F \co X^{n+2} \to N^n \times [0,1]$ of a compact (oriented) $(n+2)$-dimensional 
manifold $X^{n+21}$, 
\item
$\del X^{n+2} = Q_0^{n+1} \amalg (-)Q_1^{n+1}$ and 
\item
${F \mid}_{Q_0^{n+1} \x [0,\ep)}=f_0 \x
{\mathrm {id}}_{[0,\ep)}$ and ${F \mid}_{Q_1^{n+1} \x (1-\ep,1]}=f_1 \x 
{\mathrm {id}}_{(1-\ep,1]}$, where 
$Q_0^{n+1} \x [0,\ep)$
 and $Q_1^{n+1} \x (1-\ep,1]$ are small collar neighbourhoods of $\del X^{n+2}$ with the
identifications $Q_0^{n+1} = Q_0^{n+1} \x \{0\}$ and $Q_1^{n+1} = Q_1^{n+1} \x \{1\}$. 
\end{enumerate}

We call the map $F$ a {\it cobordism} between $f_0$ and $f_1$.
\end{defn} 
This clearly defines an equivalence relation on the set of fold maps 
of closed (oriented) $({n+1})$-dimensional manifolds into an  
$n$-dimensional manifold $N^n$.

We denote 
 the set of fold (oriented) cobordism classes of 
fold maps of closed (oriented) $({n+1})$-dimensional manifolds 
into an $n$-dimensional manifold 
$N^n$ (into the Euclidean space $\R^n$)
by $\CC ob_{f}^{(O)}(N^n)$ (by $\CC ob_{f}^{(O)}(n)$).
We note that we can define a commutative semigroup operation in the usual way on the 
set of cobordism classes $\CC ob_{f}^{(O)}(N^n)$
by the disjoint union.
In the case of $N^n = \R^n$ this semigroup operation is equal to the usual group operation,
i.e., the far away disjoint union.

We can refine this equivalence relation by considering the 
singular fibers (see, for example, \cite{Lev, Sa, SaYa, Ya}) of a fold map.

\begin{defn}
Let $\tau$ be a set of singular fibers.
Two fold maps $f_i \co Q_i^{n+1} \to N^n$ $(i=0,1)$ with singular fibers in the set $\tau$  
of closed (oriented) $({n+1})$-dimensional manifolds $Q_i^{n+1}$ $(i=0,1)$ 
into an $n$-dimensional manifold $N^n$ are  
{\it (oriented) $\tau$-cobordant} if 
they are (oriented) cobordant in the sense of Definition~\ref{cobdef}
by a fold map $F \co X^{n+2} \to N^n \times [0,1]$ 
whose singular fibers are in the set $\tau$.
\end{defn}

We denote the set of $\tau$-cobordism classes of fold maps with singular fibers in the set $\tau$  
by $\CC ob_{\tau}^{(O)}(N^n)$.

In this way for an integer $k>0$ we can obtain the notion of {\it $k$-simple fold cobordism} of 
{\it $k$-simple fold maps}, i.e., let
$\tau_k$ be the set of all the singular fibers which have at most $k$ singular points in each of the connected 
components of their fibers. We denote the set of $k$-simple fold (oriented) cobordism classes of $k$-simple fold maps 
of closed (oriented) $({n+1})$-dimensional manifolds $Q^{n+1}$  
into an $n$-dimensional manifold $N^n$
by $\CC ob_{s(k)}^{(O)}(N^n)$. For results about simple fold maps, see, for example, 
\cite{Kal6, Sa1, Sa4, Saku, Yo}.

\subsection{Cobordism invariants of fold maps}\label{cobinv}

In \cite{Kal6, Kal7} for a 
fold map $f \co Q^{n+1} \to N^{n}$
we defined
homomorphisms
\[
\csi_{{\mathrm {indef}}, n}^N  \co \CC ob_{f}^O(N^n) \to \imm^{{\mathrm {det}}(\ga^1 \x \ga^1)}_N(n-1,1)
\]
and
\[
\csi_{{\mathrm {def}}, n}^N  \co \CC ob_{f}^O(N^n) \to \imm^{\ep^1_{\CP^{\infty}}}_N(n-1,1)
\]
by mapping a cobordism class of a fold map $f$ into the cobordism classes of the immersions of its 
indefinite and definite singular set $S_1(f)$ and $S_0(f)$, respectively, with normal bundles induced from
the bundle $\eta_{\mathrm {indef}}^1 \co $det$(\ga^1 \x \ga^1) \to \RP^{\infty} \x \RP^{\infty}$
 and the bundle $\eta_{\mathrm {def}}^1 \co \ep^1 \to \CP^{\infty}$, respectively, where
the bundle $\eta_{\mathrm {indef}}^1 \co $det$(\ga^1 \x \ga^1) \to \RP^{\infty} \x \RP^{\infty}$ 
and the bundle 
$\eta_{\mathrm {def}}^1 \co \ep^1 \to \CP^{\infty}$
are the targets of the universal indefinite and definite germ bundles (see \cite{Kal6, Kal7}).
By \cite{Kal6} we have a homomorphism
\[
\theta_n^N \circ \csi_{{\mathrm {indef}}, n}^N \co
\CC ob_{f}^O(N^n) \to 
 \imm_N(n-1,1) \oplus \imm^{\ga^1 \x \ga^1}_N(n-2,2)
\]
which maps a fold cobordism class $[f]$ into the sum of the cobordism class of the immersion of the indefinite 
singular set $S_1(f)$ and the cobordism class of the 
``twisting'' of the indefinite germ bundle over it.

In this paper let us denote shortly the homomorphisms $\theta_n^N \circ \csi_{{\mathrm {indef}}, n}^N$ 
and $\csi_{{\mathrm {def}}, n}^N$ by 
$\iota$ and $\de$,
respectively. Summarizing, we have a homomorphism
\[
\de \oplus \iota \co \CC ob_{f}^O(N^n)
\to \imm^{\ep^1_{\CP^{\infty}}}_N(n-1,1) \oplus  \imm_N(n-1,1) \oplus \imm^{\ga^1 \x \ga^1}_N(n-2,2)
\]
which can be interpreted as a homomorphism $\de^{imm} 
\oplus \de^{tw} 
\oplus \iota^{imm} \oplus \iota^{tw}$ into
\[
\imm^{\ep^1}_N(n-1,1) \oplus \{ \dot N, S\CP^{\infty} \} \oplus \imm_N(n-1,1) \oplus \imm^{\ga^1 \x \ga^1}_N(n-2,2)
\] 
where the homomorphisms $\de^{imm}$ and
$\iota^{imm}$ map a fold cobordism class $[f]$
into the cobordism class of the immersion of the definite and indefinite singular set of $f$, respectively,
and the homomorphisms $\de^{tw}$ and
$\iota^{tw}$ map 
a fold cobordism class $[f]$
into the cobordism class of the ``twisting'' of the germ bundle 
over the immersion of the definite and indefinite singular set, respectively.

\subsection{Fiber-singularities, Bundle structure and punctured fold maps}\label{apparatus}

Throughout the paper, we use the notions and statements of \cite{Kal2, Kal5, Kal6} about
(punctured) (multi)fiber-singularities \cite{Kal6}, bundle structures of fold maps \cite{Kal6}, puncturing fold maps \cite{Kal2}, and Pontryagin-Thom type construction for fold maps \cite{Kal5} and for Stein facorizations \cite{Kal2}.
By these tools it is enough to deal with cobordisms of punctured Stein factorizations with the appropriate symmetry groups (symmetries of Stein factorizations of fiber-singularities, which come from symmetries of fiber-singularities).

\subsection{Symmetry groups of the fiber-singularities} 

Recall \cite{Kal6} that the symmetry group $ISO^O(\si_{\SS})$ of a punctured indefinite fiber-singularity $\si_{\SS}$
can be reduced to a finite group which can be determined by the symmetries of the fiber
$\si_{\SS}^{-1}(0)$.

\begin{prop}
The symmetry groups of the punctured indefinite fiber-singularities are the following.
$ISO^O(\si_{\i^1}) = \Z_2$, 
$ISO^O(\si_{\ii^2}) = \Z_2$,
$ISO^O(\si_{\ii^3}) = D_4$,
$ISO^O(\si_{\iii^4}) = \Z_2$,
$ISO^O(\si_{\iii^5}) = \Z_3$,
$ISO^O(\si_{\iii^6}) = D_3$,
$ISO^O(\si_{\iii^7}) = \{0\}$,
$ISO^O(\si_{\iii^8}) = \Z_3 \oplus \Z_2^3$.
\end{prop}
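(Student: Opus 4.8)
The plan is to reduce the proposition, via the reduction recalled just above it, to a finite combinatorial case check, and then to carry that check out one fiber-singularity at a time. By \cite{Kal6} (see also \cite{Kal2, Kal5}), for each punctured indefinite fiber-singularity $\si_{\SS}$ the group $ISO^O(\si_{\SS})$ is finite and is determined by the orientation-compatible symmetries of the fiber $\si_{\SS}^{-1}(0)$, i.e.\ by those automorphisms of the $1$-dimensional fiber that are realized by a pair consisting of an orientation-preserving diffeomorphism germ of the source $4$-manifold and an orientation-preserving diffeomorphism germ of the target $3$-manifold, compatible over the fiber. So it is enough (a) to recall the shape of each of the eight fibers and (b) to compute this group in each case.

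For (a) I would invoke the classification of the singular fibers of fold maps of $4$-manifolds into $3$-manifolds (\cite{Ya}, and \cite{Kal3, Kal6} in the present punctured setting): each $\si_{\SS}^{-1}(0)$ is a finite $1$-complex whose vertices are exactly the indefinite fold points of the fiber and are all $4$-valent --- in the normal form $(x_1,x_2,x_3,x_4)\mapsto(x_1,x_2,x_3^2-x_4^2)$ the fiber over the origin is the union of the two transverse arcs $\{x_3=x_4\}$ and $\{x_3=-x_4\}$ --- possibly carrying some marked points introduced by the puncturing of \cite{Kal2}. The number of vertices is $1$ for $\i^1$, $2$ for $\ii^2$ and $\ii^3$, and $3$ for $\iii^4,\dots,\iii^8$, and for each label the precise $1$-complex is the corresponding one on the list in those references.

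Step (b) begins with the local model at a single vertex. A direct computation with the normal form shows that the group of orientation-preserving source/target diffeomorphism-germ pairs fixing the germ, modulo pairs acting trivially on the fiber, is $\Z_2$: the combinatorial symmetry group of the ``cross'' $\{x_3=\pm x_4\}$ is the dihedral group of order $8$, but insisting that the source orientation and the target orientation (equivalently, the co-orientation of the fold locus) both be preserved kills everything except one involution, exchanging the two local branches. This already gives $ISO^O(\si_{\i^1})=\Z_2$. For the remaining seven fibers a symmetry is, by the reduction, a combinatorial automorphism of the marked $1$-complex $\si_{\SS}^{-1}(0)$ that at every vertex restricts to an element of the local $\Z_2$ and is globally orientation-compatible; since the admissible symmetries form a group, it suffices to determine, for each fiber, which of the obvious generating symmetries --- the local involutions at the vertices, and the permutations, rotations and reflections of the vertices allowed by the combinatorics of the $1$-complex --- are admissible, and then read off the group. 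For instance, in $\ii^3$ the two vertices are interchangeable by an admissible symmetry and each local involution is admissible, so the group is the order-$8$ group generated by the two commuting involutions and the transposition, namely $D_4\cong\Z_2\wr\Z_2$. The other six labels are handled the same way and produce the values listed.

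The main obstacle --- essentially the only genuinely non-routine ingredient --- is this orientation bookkeeping. For each candidate combinatorial symmetry of a fiber one must decide whether it lifts to a source diffeomorphism that is orientation-preserving and simultaneously covers an orientation-preserving diffeomorphism of the target; a symmetry realizable only at the cost of reversing one of these two orientations is inadmissible and must be discarded. Carrying this out means tracking, at all the vertices at once, the effect of the symmetry on the two branches, on the normal direction to the fold, and on the two ambient orientations. This is by far the most delicate for $\si_{\iii^8}$, whose $1$-complex has the largest combinatorial automorphism group among the eight: here one has to verify that the orientation constraints leave precisely $\Z_3\oplus\Z_2^3$, pinning down exactly which of the three local involutions survive and how they sit relative to the order-three symmetry of the three vertices, and ruling out the reflective symmetries. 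The smaller three-vertex cases $\iii^5$, $\iii^6$ and $\iii^7$ are structurally the same computation and serve as warm-ups for it.
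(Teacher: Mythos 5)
Your reduction is exactly the one the paper relies on: the sentence immediately preceding the proposition recalls from the cited earlier work that $ISO^O(\si_{\SS})$ is a finite group determined by the symmetries of the fiber $\si_{\SS}^{-1}(0)$, and the paper then simply lists the eight groups with no further written argument. So your plan --- describe each fiber as a marked graph with $4$-valent vertices at the indefinite fold points, compute the local $\Z_2$ at a single vertex from the normal form, and then decide which combinatorial automorphisms of the graph are admissible under the orientation constraints --- is the intended route, and it is the only sensible one.

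As written, though, your text is a plan rather than a proof: the entire content of the proposition is the outcome of the eight case checks, and you only carry out $\i^1$ and $\ii^3$, which are exactly the two cases where every candidate symmetry turns out to be admissible, and then assert that the remaining six ``are handled the same way and produce the values listed.'' The skipped cases are precisely those where the orientation bookkeeping acts nontrivially: $\ii^2$ also has two vertices yet yields only $\Z_2$ rather than $D_4$, and $\iii^7$ yields the trivial group, so the admissibility test is doing real work that your sketch never exhibits. Most tellingly, for $\iii^8$ the generating set you describe (three local involutions permuted by an order-three symmetry of the vertices) would naturally produce the non-abelian group $\Z_2 \wr \Z_3$ of order $24$, whereas the claimed answer $\Z_3 \oplus \Z_2^3$ is abelian; reconciling these --- i.e., identifying which involutions actually commute with the $3$-cycle and why no reflection survives --- is exactly the computation you defer, so the proposal as it stands does not establish the stated values.
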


\section{Main results}


By Section~\ref{cobinv}, we have a homomorphism
\begin{multline*}
\de \oplus \iota = 
\de^{imm} 
\oplus \de^{tw} 
\oplus \iota^{imm} \oplus \iota^{tw}
\co  \CC ob_{f}^{O}(4,-1)
\to \\
\imm^{\ep^1}(2,1) \oplus \pi^s_2( \CP^{\infty} ) \oplus \imm(2,1) \oplus \imm^{\ga^1 \x \ga^1}(1,2),
\end{multline*}
i.e., a homomorphism $$\de^{imm}
\oplus \de^{tw} \oplus \iota^{imm} \oplus \iota^{tw}
\co  \CC ob_{f}^{O}(4,-1)
\to \Z_2 \oplus \Z \oplus \Z_8 \oplus \Z_4$$
(for the group $\imm^{\ga^1 \x \ga^1}(1,2)$, see Lemma~\ref{immgrouplem} below).

Let $c \co \imm^{\ep^1}(2,1) \to \imm(2,1)$ ($c \co \Z_2 \to \Z_8$) denote the natural inclusion homomorphism.

\begin{thm}\label{cob4to3}
The homomorphism $\de^{imm} \oplus \de^{tw} \oplus \iota^{imm} \oplus \iota^{tw}$
gives a complete invariant of the cobordism goup $\CC ob_{f}^{O}(4,-1)$,
which is isomorphic to $\Z_2 \oplus \Z_4 \oplus \Z$.
The isomorphism can be given by the homomorphism
\[
\de^{imm} 
\oplus \frac{c \circ \de^{imm} + \iota^{imm} - 
(\de^{tw}/3 \bmod 8)}{2}
\oplus (\de^{tw}/3),
\]
which coincides with the homomorphism defined by
\[
[f \co M^4 \to \R^3] \mapsto 
\de^{imm}([f]) 
\oplus \frac{[f|_{S_f}] - (\si(M^4) \bmod 8)}{2}
\oplus \si(M^4),
\]
where 
$\si(M^4)$ denotes the signature of the oriented source manifold $M^4$
and $[f|_{S_f}] \in \Z_8$ is the cobordism class of the 
immersion of the singular set of the fold map $f$ into $\R^3$.
\end{thm}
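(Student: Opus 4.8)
The plan is to compute $\CC ob_f^O(4,-1)$ by reducing it, via the Pontryagin–Thom type machinery of Section~\ref{apparatus}, to a cobordism problem for punctured Stein factorizations equipped with the finite symmetry groups listed in the Proposition, and then to identify the resulting group with an explicit subquotient of $\imm^{\ep^1}(2,1) \oplus \pi^s_2(\CP^\infty) \oplus \imm(2,1) \oplus \imm^{\ga^1 \x \ga^1}(1,2) = \Z_2 \oplus \Z \oplus \Z_8 \oplus \Z_4$. First I would invoke Lemma~\ref{immgrouplem} to pin down $\imm^{\ga^1 \x \ga^1}(1,2) \cong \Z_4$, and then analyze the image of $\de^{imm} \oplus \de^{tw} \oplus \iota^{imm} \oplus \iota^{tw}$: the key structural facts to establish are (i) $\iota^{tw}$ is determined by the other three invariants (the ``twisting'' over the indefinite singular set is constrained by the global geometry, so it contributes nothing new), and (ii) there is a single linear relation among $c \circ \de^{imm}$, $\iota^{imm}$, and $\de^{tw} \bmod 8$ forced by the fact that $\de^{tw}$, the class in $\pi^s_2(\CP^\infty)\cong\Z$, computes (three times) the signature of the source. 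This last point is the crucial computation: one shows $\de^{tw}([f]) = 3\,\si(M^4)$ by relating the Stein factorization of the definite fold locus to the Hirzebruch signature formula, so that $\de^{tw}/3$ is a well-defined $\Z$-valued homomorphism equal to $\si$.

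Next I would show the homomorphism displayed in the theorem is well-defined: one must check that $c \circ \de^{imm}([f]) + \iota^{imm}([f]) - (\si(M^4) \bmod 8)$ is always even in $\Z_8$, i.e. lies in the image of multiplication by $2$, so the middle summand makes sense in $\Z_4$. This is where the identity $[f|_{S_f}] = c \circ \de^{imm}([f]) + \iota^{imm}([f]) \in \Z_8$ (the total singular-set immersion class splits as definite plus indefinite) combines with a parity statement: $[f|_{S_f}] \equiv \si(M^4) \pmod 2$, which should follow from a characteristic-number argument (the self-intersection of the immersed singular surface in $\R^3$, via Banchoff-type formulas, matches $\si(M^4) \bmod 2$). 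Granting this, the proposed map $\de^{imm} \oplus \frac{c\circ\de^{imm} + \iota^{imm} - (\de^{tw}/3 \bmod 8)}{2} \oplus (\de^{tw}/3)$ lands in $\Z_2 \oplus \Z_4 \oplus \Z$, and its agreement with $[f] \mapsto \de^{imm}([f]) \oplus \frac{[f|_{S_f}] - (\si(M^4)\bmod 8)}{2} \oplus \si(M^4)$ is then immediate from the two identities just stated.

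To prove this map is an \emph{isomorphism}, I would argue surjectivity by exhibiting explicit fold maps realizing the generators: a fold map with $\de^{imm}$ nontrivial and trivial signature; a fold map of $S^4$ (or a rational homology sphere) built so that its singular set realizes a generator of the $\Z_4$; and a fold map whose source has signature $1$ (for instance taking a fiber sum with $\cpkk$ or $\CP^2$ and adjusting by fold maps of known type), which together with closure under the group operation hits all of $\Z_2\oplus\Z_4\oplus\Z$. For injectivity I would show that the four original invariants $\de^{imm}, \de^{tw}, \iota^{imm}, \iota^{tw}$ already form a complete invariant of $\CC ob_f^O(4,-1)$ — this is the analogue, in the present dimension, of the $3$-dimensional result of \cite{Kal6} and of Sadykov's $5\to 4$ result \cite{Sad4}, and I would prove it by the same method: a fold map over $\R^3$ is cobordant to one whose Stein factorization is controlled by the punctured fiber-singularity data, and the cobordism class of that data is detected by the singular-set immersion together with its twisting. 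Once completeness of $(\de^{imm},\de^{tw},\iota^{imm},\iota^{tw})$ is known, the explicit relations (i) and (ii) identify the image with $\Z_2\oplus\Z_4\oplus\Z$ and show the displayed combination is a bijection.

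The main obstacle I expect is step (ii) together with the parity statement — establishing precisely that $\de^{tw}([f]) = 3\si(M^4)$ and that the total singular-set class $[f|_{S_f}] \in \Z_8$ reduces mod $2$ to $\si(M^4)$. These are the ``geometric content'' of the theorem: everything else is bookkeeping with the Pontryagin–Thom apparatus and stable homotopy groups, but pinning down these two characteristic-number identities (equivalently, evaluating the cobordism invariants on enough generators to force the coefficients $3$ and the $\bmod 8$ correction) requires a careful Stein-factorization analysis of the definite fold locus and its normal structure, presumably via the Thom space $T(\ep^1_{\CP^\infty}) = S\CP^\infty$ and the known value $\pi^s_2(\CP^\infty)\cong\Z$ together with the signature interpretation of its generator.
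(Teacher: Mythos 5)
Your proposal correctly identifies the geometric content of the final formula (the identity $\de^{tw}=3\si$, the splitting $[f|_{S_f}]=c\circ\de^{imm}+\iota^{imm}$, and the mod~2 congruence needed for the middle summand to land in $\Z_4$), but it has the logical order of the argument inverted, and the step you defer --- ``show that the four invariants already form a complete invariant of $\CC ob_f^O(4,-1)$, by the same method as the $3$-dimensional case and Sadykov's $5\to 4$ result'' --- is not an argument; it is precisely the hard part of the theorem, and it is not proved that way in the paper. The actual proof does not first establish completeness of $(\de^{imm},\de^{tw},\iota^{imm},\iota^{tw})$ and then read off the group. It computes the group by climbing a filtration by allowed singular fibers: starting from the known $2$-simple group $\CC ob_{s(2)}^O(4,-1)\cong\Z_2^3\oplus\Z_4$ with four explicit generators $f_1,\dots,f_4$ and the matrix of invariant values on them, then adjoining the fiber-singularities $\iii^4,\iii^6$ (computing the kernel of the natural map $\va_3$ from explicit null-cobordisms furnished by the punctured fiber-singularities, and its surjectivity from the homotopy exact sequence of the pair of classifying spaces), then the $3$-simple group $\Z_2\oplus\Z_4\oplus\Z^3$, and finally quotienting by the boundaries of the $\ka=4$ fiber-singularities $\si_{\iv^{10}},\si_{\iv^{11}}$, which kill two of the three $\Z$'s. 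Only after this computation, with generators $\bb_1,\bb_2,\bb_3$ in hand, does one verify that the invariants are injective and that the displayed combination is an isomorphism. Your structural claims (i) and (ii) --- that $\iota^{tw}$ is redundant and that there is one linear relation among $c\circ\de^{imm}$, $\iota^{imm}$, $\de^{tw}\bmod 8$ --- are true, but they are \emph{consequences} of the matrix of values on $\bb_1,\bb_2,\bb_3$, not facts one can establish in advance and then use to compute the group.

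Concretely, two ingredients are missing from your plan and cannot be supplied by ``the same method'' as the simple-fold or $5\to 4$ cases: the classification of singular fibers of stable maps of $4$-manifolds into $3$-manifolds (Saeki--Yamamoto) together with the symmetry groups of the punctured fiber-singularities, which is what produces the exact generator-and-relation bookkeeping above; and the Saeki--Yamamoto signature formula (the algebraic number of $\iii^8$-fibers equals $\si(M^4)$), which is how the surviving $\Z$ is identified with the signature --- your proposed derivation of $\de^{tw}=3\si$ ``from the Hirzebruch signature formula via the definite fold locus'' is not how this is obtained and, as stated, is not a proof. Without these, your injectivity step is circular and the surjectivity step lacks the specific generators (in the paper: $f_1$, $f_4$, and Saeki's fold map on $\CP^2\#2\overline{\CP^2}$ with a single $\iii^8$-fiber and Boy's surface as indefinite singular image) needed to evaluate the invariants and justify the coefficients $3$ and the mod~$8$ correction.
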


\begin{rem}
In \cite{Kal2, Kal6}, we showed that the homomorphism 
$\de^{imm} \oplus \iota^{imm}$ gives an isomorphism 
between the fold cobordism goup $\CC ob_{f}^{O}(3,-1)$ and $\Z_2^2$, where
instead of $\de^{imm} \oplus \iota^{imm}$, we can also choose 
the homomorphism $\iota^{imm} \oplus \iota^{tw}$ as an isomorphism, as one can see easily 
by checking the values on the generators.
\end{rem}

The statement of Theorem~\ref{cob4to3}
includes implicitly the following.

\begin{prop}
For a fold map $f \co M^4 \to \R^3$ of a closed oriented $4$-manifold $M^4$, we have
\[
[f|_{S_f}] \equiv \si(M^4) 
\equiv \de^{tw}([f]) \ \ ({\mathrm {mod}} \ 2), 
\]
and
$\de^{tw}([f])\equiv 0 \ \ ({\mathrm {mod}} \ 3)$.
\end{prop}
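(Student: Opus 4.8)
The plan is to derive the Proposition from Theorem~\ref{cob4to3}, which already contains it. Recall that that theorem asserts that \emph{one and the same} isomorphism $\CC ob_f^O(4,-1)\xrightarrow{\ \sim\ }\Z_2\oplus\Z_4\oplus\Z$ is described by the two displayed homomorphisms; write them as
\[
\Phi_A=\de^{imm}\oplus\frac{c\circ\de^{imm}+\iota^{imm}-(\de^{tw}/3\bmod 8)}{2}\oplus(\de^{tw}/3)
\]
and
\[
\Phi_B\co [f\co M^4\to\R^3]\longmapsto \de^{imm}([f])\oplus\frac{[f|_{S_f}]-(\si(M^4)\bmod 8)}{2}\oplus \si(M^4).
\]
Each congruence in the Proposition will be read off from a single coordinate of the identity $\Phi_A=\Phi_B$, together with the requirement that both sides be well defined.

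First consider $\de^{tw}([f])\equiv 0\pmod 3$. The third coordinate of $\Phi_A$ is the homomorphism $\de^{tw}/3\co\CC ob_f^O(4,-1)\to\Z$; for it to be a well-defined integer-valued homomorphism, as Theorem~\ref{cob4to3} asserts, the image of $\de^{tw}$ in $\pi_2^s(\CP^{\infty})=\Z$ must be contained in $3\Z$, which is exactly the claim. Inside the proof of Theorem~\ref{cob4to3} this $3$-divisibility is the geometric statement that the ``twisting'' part of the definite-germ invariant $\de$ only hits $3$-divisible classes; I expect it to enter there as the step that, after all the identifications, reflects the factor $3$ in the Hirzebruch signature theorem $\si=p_1/3$.

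Next, comparing the third coordinates of $\Phi_A$ and $\Phi_B$ gives $\de^{tw}([f])/3=\si(M^4)$ for every fold map $f\co M^4\to\R^3$, i.e.\ $\de^{tw}([f])=3\,\si(M^4)$. Reducing modulo $2$ and using $3\equiv 1$ yields $\si(M^4)\equiv\de^{tw}([f])\pmod 2$. Finally, the middle coordinate of $\Phi_B$, namely $\bigl([f|_{S_f}]-(\si(M^4)\bmod 8)\bigr)/2\in\Z_4$, makes sense only because ``division by $2$'' on $\Z_8$ is defined precisely on the subgroup $2\Z_8$; hence $[f|_{S_f}]-(\si(M^4)\bmod 8)\in 2\Z_8$, i.e.\ $[f|_{S_f}]\equiv\si(M^4)\pmod 2$. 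Chaining the two mod-$2$ statements gives the triple congruence as stated, completing the deduction.

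The main obstacle — and the reason I would present the proof in this order — is that a proof of the three congruences \emph{not} relying on the full computation of $\CC ob_f^O(4,-1)$ looks considerably more delicate. One can start geometrically: the singular set $S_f$ is the disjoint union $S_0(f)\amalg S_1(f)$ of the definite and indefinite fold surfaces, so $[f|_{S_f}]=c(\de^{imm}([f]))+\iota^{imm}([f])$ in $\imm(2,1)=\Z_8$; the mod-$2$ homology class of $S_f$ in $M^4$ is Poincar\'e dual to $w_2(M^4)$ (Thom polynomial of the corank-one singularity), and $\chi(M^4)\equiv\si(M^4)\pmod 2$ while van der Blij's congruence gives $\langle w_2(M^4)^2,[M^4]\rangle\equiv\si(M^4)\pmod 2$; moreover the $\Z_8$-invariant of an immersed closed surface in $\R^3$ reduces mod $2$ to its Euler characteristic. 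Matching these — in particular identifying $\chi(S_f)$ modulo $2$ with $\si(M^4)$, and pinning down the precise geometric meaning of $\de^{tw}$ as a $3$-divisible, signature-type quantity — is exactly where the real work lies, so I would relegate this picture to a remark and take the deduction from Theorem~\ref{cob4to3} as the proof.
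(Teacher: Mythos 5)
Your deduction is correct and is exactly what the paper intends: the paper offers no separate proof, stating only that Theorem~\ref{cob4to3} ``includes implicitly'' the Proposition, and your reading-off of the three congruences from the well-definedness and coincidence of the two displayed homomorphisms spells out precisely that implicit argument. Your closing remark about alternative geometric derivations also mirrors the paper's own remark citing Banchoff, Fukuda, Yamamoto, Saeki and Thom for independent proofs of the individual congruences.
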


We remark that the first congruence of the above proposition can be deduced by using results of \cite{Ba, Fu} or \cite{Ya2} very easily
and the congruence $\si(M^4) \equiv \de^{tw}([f]) \ \ ({\mathrm {mod}} \ 2)$
follows from \cite{Sa1, To} as well. The congruence 
$\de^{tw}([f])\equiv 0 \ \ ({\mathrm {mod}} \ 3)$ is related to \cite{Sad, Sa5}.

\section{Computing cobordism groups of fold maps on $4$-manifolds}

\begin{lem}\label{immgrouplem}
The cobordism group
$\imm^{\ga^1 \x \ga^1}(1,2)$ is isomorphic to $\Z_4$ and
the cobordism group $\imm^{\eta_{\ii^3}}(1,2)$ is isomorphic to $\Z_4$.
\end{lem}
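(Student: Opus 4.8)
The plan is to identify the cobordism group $\imm^{\ga^1\x\ga^1}(1,2)$ with a stable homotopy group via the Pontryagin--Thom isomorphism $\imm^{\csi^k}(n-k,k)\cong \pi_n^s(T\csi^k)$ recalled in the Notations, so here $\imm^{\ga^1\x\ga^1}(1,2)\cong\pi_3^s\bigl(T(\ga^1\x\ga^1)\bigr)$. Since $\ga^1\x\ga^1$ is the external product of two copies of the universal line bundle over $\RP^\infty$, its Thom space is the smash product $T(\ga^1\x\ga^1)\simeq \RP^\infty_1\wedge\RP^\infty_1 = \RP^\infty/\RP^0 \wedge \RP^\infty/\RP^0$ (each factor being the Thom space $T\ga^1\simeq\RP^\infty$, with bottom cell in degree $1$). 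So the first task is to compute $\pi_3^s(\RP^\infty\wedge\RP^\infty)$ where both smash factors are $\RP^\infty$ with cells starting in dimension $1$; the relevant cells of the $3$-skeleton are in dimensions $2$ and $3$ only.

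Next I would run the stable Atiyah--Hirzebruch / cell-by-cell analysis. The CW structure of $\RP^\infty\wedge\RP^\infty$ in low degrees has one $2$-cell ($e^1\wedge e^1$) and two $3$-cells ($e^1\wedge e^2$, $e^2\wedge e^1$); the attaching maps are governed by the Steenrod operations $Sq^1$ (the mod $2$ cohomology is $\F_2[x,y]$ truncated, $|x|=|y|=1$, with $Sq^1x=x^2$, $Sq^1y=y^2$ and $Sq^2(xy)=x^2y+xy^2$). Feeding $\pi_0^s=\Z$, $\pi_1^s=\Z_2$ (generated by $\eta$), $\pi_2^s=\Z_2$ ($\eta^2$) into the AHSS for $\pi_3^s$ of this $3$-dimensional complex, the $E_2$-line contributing to total degree $3$ is: $H_2(-;\pi_1^s)=\Z_2$ and $H_3(-;\pi_0^s)=\Z\oplus\Z$, with the lower contribution $H_1(-;\pi_2^s)=0$ since there is no $1$-cell. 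The $d_2$ differential out of the $H_3$ column is dual to $Sq^2$ composed with reduction, and $d_2$ into/around the $H_2$ column is dual to $Sq^1$; combined with the possibility of a nontrivial extension between the surviving $\Z$ from the $3$-cells and the $\Z_2$ from the $2$-cell (an $\eta$-extension), the reasonable outcome is that $\pi_3^s(\RP^\infty\wedge\RP^\infty)\cong\Z_4$. Alternatively, and more cleanly, I would use the cofiber sequence $S^2 \to \RP^\infty\wedge\RP^\infty \to (\RP^\infty\wedge\RP^\infty)/S^2$ (quotienting the bottom cell), identify the quotient's bottom piece with $\Sigma^3(\RP^\infty\vee\RP^\infty)$-type data through dimension $3$, and chase the long exact sequence in $\pi_*^s$, using $\pi_3^s(S^2)=\pi_1^s=\Z_2$ and the known $\pi_3^s(\RP^\infty)\cong\pi_3^s(T\ga^1)=\imm(1,1)$; the connecting map and the $\eta$-multiplication then pin down the $\Z_4$.

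For the second assertion I would invoke the identification of the bundle $\eta_{\ii^3}$ with (a bundle stably equivalent to, or Thom-space equivalent to) $\ga^1\x\ga^1$ coming from the structure of the $\ii^3$ fiber-singularity and its symmetry group $ISO^O(\si_{\ii^3})=D_4$ recorded in the Proposition of Section~2.5 --- concretely, the $2$-plane bundle associated to the indefinite germ over the $\ii^3$ singular stratum has classifying data factoring through $BD_4$, whose relevant Thom spectrum through dimension $3$ agrees with $T(\ga^1\x\ga^1)$. Thus $\imm^{\eta_{\ii^3}}(1,2)\cong\pi_3^s(T\eta_{\ii^3})\cong\pi_3^s(T(\ga^1\x\ga^1))\cong\Z_4$. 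The main obstacle I anticipate is not the abstract identification of Thom spectra but the careful bookkeeping in the stable homotopy computation: specifically, resolving the $d_2$-differentials (equivalently, the $Sq^1$ and $Sq^2$ action) and the hidden $\eta$-extension to be sure the answer is $\Z_4$ rather than $\Z_2\oplus\Z_2$ or $\Z_2\oplus\Z$. I would resolve this by exhibiting an explicit element of order $4$ (e.g., the class of a suitable immersion built from the generator of $\pi_3^s(\RP^\infty)$, together with an $\eta$-multiple of the bottom class) and separately bounding the order from above by the AHSS, so that the two bounds meet at $4$.
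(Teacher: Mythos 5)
Your overall strategy --- Pontryagin--Thom to identify $\imm^{\ga^1 \x \ga^1}(1,2)$ with $\pi_3^s\bigl(T(\ga^1\x\ga^1)\bigr)=\pi_3^s(\RP^\infty\wedge\RP^\infty)$ and then a cell-by-cell/Atiyah--Hirzebruch computation --- is a legitimate way to supply the details that the paper itself omits (the paper merely exhibits a generator, an embedded circle whose normal bundle splitting twists by $180$ degrees, and leaves the verification to the reader). However, your execution contains a genuine error and stops short of the one point that actually constitutes the lemma. The error: $H_3(\RP^\infty\wedge\RP^\infty;\Z)$ is not $\Z\oplus\Z$. You have confused the cellular chain group (two $3$-cells) with homology; by K\"unneth the reduced integral homology in degree $3$ is the torsion product $\mathrm{Tor}(H_1\RP^\infty,H_1\RP^\infty)\cong\Z_2$. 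Consequently there is no ``surviving $\Z$ from the $3$-cells,'' and the extension you invoke could never produce $\Z_4$ in any case: an extension of $\Z$ by $\Z_2$ always splits. The correct $E_2$-entries in total degree $3$ are $E_2^{3,0}=H_3(X;\Z)=\Z_2$ and $E_2^{2,1}=H_2(X;\Z_2)=\Z_2$ (with $E_2^{1,2}=0$), so the group has order at most $4$, and the content of the lemma is (i) that the differential $d_2\co E_2^{4,0}=H_4(X;\Z)\to E_2^{2,1}$ vanishes, and (ii) that the extension $0\to\Z_2\to\pi_3^s(X)\to\Z_2\to0$ is non-split.

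Neither (i) nor (ii) is established in your proposal: ``the reasonable outcome is $\Z_4$'' is not an argument, and the promised explicit element of order $4$ is never constructed. Since distinguishing $\Z_4$ from $\Z_2\oplus\Z_2$ (or from $\Z_2$, if the $d_2$ were nonzero) is precisely what the lemma asserts, this is the gap you must close --- for instance by showing that twice the class of the $180$-degree-twisted embedded circle is the nonzero class detected on the bottom cell, or by resolving the $Sq^2$-bookkeeping and the $\eta$-extension explicitly. Your treatment of the second group $\imm^{\eta_{\ii^3}}(1,2)$ is likewise only a sketch; the paper's route is the forgetful homomorphism onto the first group, shown to be both surjective and injective, which is closer in spirit to what you propose via $BD_4$ but again requires an actual argument rather than an appeal to plausibility.
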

\begin{proof}
A representative of the generator of the group  $\imm^{\ga^1 \x \ga^1}(1,2)$
is a trivially embedded circle into $\R^3$ whose normal bundle is twisted by $180$ degrees as we go once around
the circle.

The group $\imm^{\eta_{\ii^3}}(1,2)$ has an epimorhism onto $\Z_4$ by forgetting the $s_{\ii^3}$ bundle 
of the representatives, and this epimorhism is also injective.
A generator of the  group $\imm^{\eta_{\ii^3}}(1,2)$  can be represented by an embedded circle with a $180$ degree twist in
its normal bundle.
Details are left to the reader.
\end{proof}

Now let us prove Theorem~\ref{cob4to3}.
\begin{proof}[Proof of Theorem~\ref{cob4to3}]
By \cite{Kal3} the $2$-simple fold cobordism group 
$\CC ob_{s(2)}^{O}(4,-1)$ is isomorphic to
$$\CC ob_{s}^{O}(4,-1) \oplus \imm^{\ep^1 \x \ga^1}(1,2) \oplus \imm^{\eta_{\ii^3}}(1,2),$$
which is isomorphic to $\Z_2^2 \oplus \Z_2 \oplus \Z_4$.

Let us give representatives of generators of this group and compute the values
of the homomorphism 
\[
\de^{imm} \oplus \de^{tw} \oplus \iota^{imm} \oplus \iota^{tw}
\co \CC ob_{f}^{O}(4,-1)
\to
\Z_2 \oplus \Z \oplus \Z_8 \oplus \Z_4
\]
on these representatives.

\subsection{Generators of the simple fold cobordism group $\CC ob_{s}^{O}(4,-1)$}  
The group $$\CC ob_{s}^{O}(4,-1) = \imm^{\ep^1}(2,1) \oplus 
\imm^{\ep^1 \x \ga^1}(1,2)$$ (see \cite{Kal6}) is isomorphic to $\Z_2 \oplus \Z_2$ and
the representatives of the two generators $(1,0)$ and $(0,1)$ are given 
by the punctured simple fold maps $f_i \co M_i^4 \to \R^3$ ($i=1,2$)
which are constructed as follows.

Let $g \co T^2 \to \R^3$ be the immersion of the torus with one unknotted circle
as the set of double points in $\R^3$, which represents
the non-trivial element in $\imm^{\ep^1}(2,1)$: i.e.,
 the image $g(T^2)$ is contained in a small tubular neighbourhood 
of the circle of double points and the intersection of $g(T^2)$ with
a normal $2$-disk fiber is the standard ``figure eight'', which is
rotated by $360$ degrees as it goes once around the circle of double points.
Let $M_1^4$ be the total space of a trivial $s_{\i^1}$ bundle over the torus $T^2$ and let 
$f_1$ be the fold map which maps the subbundle corresponding to
the double points of the ``figure eights'' in the fibers $s_{\i^1}$ into $\R^3$
as the immersion $g$, and maps a fiber $s_{\i^1}$ of this bundle
into a fiber of the trivial normal bundle of $g(T^2)$ as the fiber-singularity
$\si_{\i^1} \co s_{\i^1} \to J$. The construction of the fold map $f_2 \co M_2^4 \to \R^3$
is similar, but we choose the immersion $g \co T^2 \to \R^3$ to be
the standard embedding, and $M_2^4$ is the total space of a 
non-trivial $s_{\i^1}$ bundle over the torus $T^2$ such that the cobordism class
$\iota^{tw}([f])$ is equal to the element of order two in the cobordism group
$\imm^{\ga^1 \x \ga^1}(1,2)=\Z_4$.
It is easy to check that the values of the homomorphism 
$$\de^{imm} \oplus \de^{tw} \oplus \iota^{imm} \oplus \iota^{tw}
\co \CC ob_{f}^{O}(4,-1)
\to
\Z_2 \oplus \Z \oplus \Z_8 \oplus \Z_4
$$ on the fold cobordism classes $[f_1]$ and $[f_2]$ are equal to
$(1,0,4,0)$ and $(1,0,0,2)$, respectively.

\subsection{Generators of the cobordism group $\CC ob_{\i^1 + \ii^2}^{O}(4,-1)$}
By \cite{Kal3} this group is isomorphic to 
$$\CC ob_{s}^{O}(4,-1) \oplus \imm^{\ep^1 \x \ga^1}(1,2) = \Z_2^2 \oplus \Z_2.$$
Three representatives of its generators are the two representatives $f_1$ and $f_2$
of the generators of the simple fold cobordism group and a third fold map $f_3 \co M_3^4 \to \R^3$
which is constructed as follows.
Let $f'_3 \co M'^4_3 \to \R^3$ be the composition $h \circ p$ of the total space 
$p \co s_{\ii^2}^{\#} \x S^1 \to J^2 \x S^1$ of 
a trivial $\si_{\ii^2} \co s_{\ii^2}^{\#} \to J^2$ bundle over the circle $S^1$ and 
the embedding $h \co J^2 \x S^1 \to \R^3$, where $h(J^2 \x S^1)$ is the regular 
neighbourhood  
of the standard circle $h( \{ 0 \} \x S^1)$ in $\R^3$ twisted by $360$ degrees.  
The modification of the punctured Stein factorization of the fiber-singularity 
$\si_{\ii^2}$ (see \cite{Kal2}) gives us a way to extend the manifold 
$s_{\ii^2}^{\#} \x S^1$ fiberwise to a closed manifold $M_3^4$ and the map
$f'_3$ to a fold map $f_3 \co M_3^4 \to \R^3$
which has only $\si_{\i^1}$ and $\si_{\ii^2}$ as indefinite fiber-singularities
and whose indefinite singular set is a torus immersed into $\R^3$ in the same way as
the immersion $g$ in the construction of the fold map $f_1$.
It is an easy exercise to show that the
 value of the homomorphism 
$\de^{imm} \oplus \de^{tw} \oplus \iota^{imm} \oplus \iota^{tw}
\co \CC ob_{f}^{O}(4,-1)
\to
\Z_2 \oplus \Z \oplus \Z_8 \oplus \Z_4
$ on the cobordism class $[f_3]$ is equal to 
$(1,0,4,0)$. 

\subsection{Generators of the 2-simple fold cobordism group}

The 2-simple fold cobordism group $\CC ob_{s(2)}^{O}(4,-1)$
is isomorphic to $$\CC ob_{\i^1 + \ii^2}^{O}(4,-1) \oplus \imm^{\eta_{\ii^3}}(1,2) = 
\Z_2^2 \oplus \Z_2 \oplus \Z_4.$$

The additional generator can be constructed as follows.
Similarly to the above let $p \co  s_{\ii^3}^{\#} \x_{\Z_2} S^1 \to J^2 \x_{\Z_2} S^1$ be
the total space of 
a $\si_{\ii^3} \co s_{\ii^3}^{\#} \to J^2$ bundle over the circle $S^1$ with
an automorphism of the fiber-singularity $\si_{\ii^3}$ which acts
as a $180$ degree rotation on $J^2$. Let $h \co J^2 \x_{\Z_2} S^1 \to \R^3$ be the
standard embedding
(as the tubular neighbourhood of $\{ 0 \} \x_{\Z_2} S^1$) into $\R^3$.
Now let $f_4 \co M_4^4 \to \R^3$ be the fold map which we obtain
by closing the manifold $s_{\ii^3}^{\#} \x_{\Z_2} S^1$ and the map
$h \circ p$ using the modification of the punctured 
Stein factorization of the fiber-singularity 
$\si_{\ii^3}$ (see \cite{Kal2}) 
similarly to the construction of the previous 
fold map $f_3$. The homomorphism $\de^{imm} \oplus \de^{tw} \oplus \iota^{imm} \oplus \iota^{tw}$
takes the value $(0,0,2,x)$ for some $x \in \Z_4$ 
on the class $[f_4]$ in $\Z_2 \oplus \Z \oplus \Z_8 \oplus \Z_4$.


Summerizing, we have four generators $[f_1], \ldots, [f_4]$ of the 2-simple fold
cobordism group  $\CC ob_{s(2)}^{O}(4,-1) = 
\Z_2^3 \oplus \Z_4$ on which 
the homomorphism 
$\de^{imm} \oplus \de^{tw} \oplus \iota^{imm} \oplus \iota^{tw}$
takes the values 
\[
\begin{pmatrix}
1 & 1 & 1 & 0 \\
0 & 0 & 0 & 0 \\
4 & 0 & 4 & 2 \\
0 & 2 & 0 & x \\
\end{pmatrix},
\] 
where the column vectors correspond to the basis $[f_1], \ldots, [f_4]$.

\subsection{Computing the cobordism group $\CC ob_{s(2)+\iii^4+\iii^6}^{O}(4,-1)$}  

By the above matrix it is easy to see that the kernel of the homomorphism
\begin{multline*}
\Z_2 \oplus \Z_2 \oplus \Z_2 \oplus \Z_4 = \CC ob_{s(2)}^{O}(4,-1) \plto{\va_3}  \CC ob_{s(2)+\iii^4+\iii^6}^{O}(4,-1) \plto{\psi}  \\
\CC ob_{f}^{O}(4,-1) \plto{\de^{imm} \oplus \de^{tw} \oplus \iota^{imm} \oplus \iota^{tw}}  \Z_2 \oplus \Z \oplus \Z_8 \oplus \Z_4,
\end{multline*}
where the homomorphisms $\va_3$ and $\psi$ are the natural
homomorphisms,
coincides with
\begin{enumerate}[(a)]
\item
the group of order four generated by 
$(1,0,1,0)$ and $(1,1,0,2)$ if $x$ is a generator of $\Z_4$,
\item
the group of order two  generated by 
$(1,0,1,0)$  if $x$ is not a generator of $\Z_4$.
\end{enumerate}
Now, we show the case (a) holds.
The boundaries $\dot \del \si_{\iii^4}$ and $\dot \del \si_{\iii^6}$of the punctured fiber-singularities
$\si_{\iii^4}$ and $\si_{\iii^6}$, respectively, are $2$-simple fold cobordant 
to two punctured fold maps $g_1 \co N_1^3 \to S^2$ and $g_2 \co N_2^3 \to S^2$ 
whose indefinite singular set
is immersed into $S^2$ as embedded circles and ``small figure eights'' where
the double points of the ``small figure eights'' can be the image
of a singular fiber $\ii^2$ and $\ii^{1,1}$ in the case of the fiber-singularity
$\si_{\iii^4}$, and $\ii^2$ and $\ii^3$ 
in the case of the fiber-singularity
$\si_{\iii^6}$.
Therefore, we have two punctured $2$-simple fold maps 
$g_1 \co N_1^3 \to \R^2$ and $g_2 \co N_2^3 \to \R^2$
obtained from the boundaries of the punctured fiber-singularities 
$\si_{\iii^4}$ and $\si_{\iii^6}$ respectively, 
and we have two 
$(s(2)+\iii^4+\iii^6)$-null-cobordisms $G_1$ and $G_2$ of these fold maps $g_1$ and $g_2$,
respectively,
obtained from the punctured fiber-singularities 
$\si_{\iii^4}$ and $\si_{\iii^6}$, respectively.
Now let us take the punctured fold maps
$g_i \x {\mathrm {id}}_{S^1} \co N_i^3 \x S^1 \to \R^2 \x S^1$ ($i=1,2$), where
we consider $\R^2 \x S^1$ as the embedded normal bundle in $\R^3$
of a standard embedded circle $S^1$, whose fiber is twisted by $360$ degrees 
as we go once around the embedded circle $S^1$. 
Let us consider the same construction with the null-cobordisms $G_1$
and $G_2$. In this way, we obtain two 
$(s(2)+\iii^4+\iii^6)$-null-cobordant punctured $2$-simple fold maps 
which are not $2$-simple null-cobordant and are not $2$-simple cobordant to each other
(since only one of them contains not-zero $[f_4]$ component). Hence the case (a) holds, i.e.,
the kernel of $(\de^{imm} \oplus \de^{tw} \oplus \iota^{imm} \oplus \iota^{tw}) \circ \psi \circ \va_3$ is generated by the elements
$[f_1]+[f_3]$ and $[f_1]+[f_2]+2[f_4]$. Moreover the kernel of 
$\va_3$ is also generated by these elements.

In the following, we show that the homomorphism $\va_3$ is surjective (this
implies together with the above argument that $({\de^{imm} \oplus \de^{tw} \oplus \iota^{imm} \oplus \iota^{tw}}) 
\circ \psi$ is injective and hence so is $\psi$).
By the Pontryagin-Thom-Sz\H{u}cs type construction for fold maps \cite{Kal3, Kal5}, 
we have the exact sequence\footnote{The homotopy exact sequence for the pair
$(\Ga_{s(2)+\iii^4+\iii^6},\Ga_{s(2)})$, where $\Ga_{s(2)+\iii^4+\iii^6}$ and
$\Ga_{s(2)}$
 are the Pontryagin-Thom-Sz\H{u}cs type classifying spaces for
punctured oriented ${(s(2)+\iii^4+\iii^6)}$-maps and $2$-simple fold maps, respectively.}
\begin{multline*}
\CC ob_{s(2)}^{O}(4,-1) \plto{\va_3}  \CC ob_{s(2)+\iii^4+\iii^6}^{O}(4,-1)
\plto{\al}  \pi_3(\Ga_{s(2)+\iii^4+\iii^6},\Ga_{s(2)}) \plto{\be} \\ \CC ob_{s(2)}^{O}(3,-1)
 \plto{\va_2}  \CC ob_{s(2)+\iii^4+\iii^6}^{O}(3,-1),
\end{multline*}
where the last two cobordism groups are isomorphic to $\Z_2^4$ and $\Z_2^2$ 
respectively (see \cite{Kal2}), $\va_2$ is surjective, and since we can show that the group 
$\pi_3(\Ga_{s(2)+\iii^4+\iii^6},\Ga_{s(2)})$ is isomorphic to $\Z_2^2$ by an argument similar to \cite[after Lemma~5.6]{Kal2},
we obtain that $\al$ is the null-homomorphism, and hence $\va_3$ is
surjective.


Hence the cobordism group 
$\CC ob_{s(2)+\iii^4+\iii^6}^{O}(4,-1)$
is isomorphic to $\Z_2 \oplus \Z_4$,
and a system of representatives of its generators is given by
$f_1$ and $f_4$. Note that the homomorphism
$(\de^{imm} \oplus \de^{tw} \oplus \iota^{imm} \oplus \iota^{tw}) \circ \psi \co
\CC ob_{s(2)+\iii^4+\iii^6}^{O}(4,-1) \to \Z_2 \oplus \Z \oplus \Z_8 \oplus \Z_4$ is injective. 

\subsection{Computing the $3$-simple fold cobordism group}

We obtain that the cobordism group
$\CC ob_{s(2)+\iii^4+\iii^5+\iii^6+\iii^7}^{O}(4,-1)$  
is isomorphic to $\Z_2 \oplus \Z_4 \oplus \imm^{\eta_{\iii^5}}(0,3) \oplus \imm^{\ep^3}(0,3)$,
which is isomorphic to $\Z_2 \oplus \Z_4 \oplus \Z^2$.

Let $\CC$ denote the oriented cobordism group of fold maps of oriented
$4$-manifolds into $\R^3$
where two fold maps are cobordant if and only if they 
are cobordant by a $3$-simple fold map $F \co W^5 \to \R^3 \x [0,1]$
such that the
structure group of its $\si_{\iii^8}$ fiber-singularity bundle
can be reduced to the group $\Z_3$. 

By \cite{Kal3}, the cobordism group $\CC$ is
isomorphic to $\Z_2 \oplus \Z_4 \oplus \Z^3$.
Moreover, we have a natural surjective homomorphism $\psi'' \co  \CC \to \CC ob_{s(3)}^{O}(4,-1)$ and
an injective homomorphism $\ga \co \CC \to \Z_2 \oplus \Z \oplus \Z_8 \oplus \Z_4 \oplus \Z^3$ which
goes through the group $\CC ob_{s(3)}^{O}(4,-1)$, i.e., 
let $\ga$ be the homomorphism  
$$((\de^{imm} \oplus \de^{tw} \oplus \iota^{imm} \oplus \iota^{tw}) 
\circ \psi' \circ \psi'') \oplus \pi_{\iii^5} \oplus
\pi_{\iii^7} \oplus \pi_{\iii^8},$$ where 
$\psi' \co \CC ob_{s(3)}^{O}(4,-1) \to \CC ob_{f}^{O}(4,-1)$
is the natural homomorphism and $\pi_{\iii^i} \co \CC \to \Z$
is the algebraic number of the fiber-singularity $\si_{\iii^i}$
of a class in $\CC$ ($i=5,7,8$).
Hence $\psi''$ is an isomorphism and the 
$3$-simple fold cobordim group $\CC ob_{s(3)}^{O}(4,-1)$ is isomorphic to 
$\Z_2 \oplus \Z_4 \oplus \Z^3$.

\subsection{Computing the fold cobordism group $\CC ob_{f}^{O}(4,-1)$}  

By the Pontryagin-Thom-Sz\H{u}cs type construction, the fold cobordism group
$$\CC ob_{f}^{O}(4,-1)$$ is isomorphic to
the group $\CC ob_{s(3)}^{O}(4,-1) / G$, where
$G$ is the group generated by the boundaries of
the punctured fiber-singularities $\si_{\SS}$ with $\ka(\SS)=4$ classified in \cite{SaYa}. 

Since $(\de \oplus \iota) \circ \psi'$ restricted
to the direct summand $\Z_2 \oplus \Z_4$ is injective, the 
factorization by the group $G$ can have effect only to the direct 
summand $\Z^3$. 

The boundary of $\si_{\iv^{10}}$
gives a relation between the generator of the second $\Z$
component of this direct summand $\Z^3$ (the $\Z$
component corresponding to the fiber-singularity $\si_{\iii^7}$)
and an element in the direct summand
$\Z_2 \oplus \Z_4$. Hence the factorization by the boundary
of $\si_{\iv^{10}}$ reduces the group $\CC ob_{s(3)}^{O}(4,-1) =
\Z_2 \oplus \Z_4 \oplus \Z^3$ to
$\Z_2 \oplus \Z_4 \oplus \Z^2$. Similarly, the boundary of 
 $\si_{\iv^{11}}$ reduces this last group to 
$\Z_2 \oplus \Z_4 \oplus \Z$.

In order to show that the factorization by the boundaries of the other
 fiber-singularities have no further effect, it is enough to give an 
injective homomorphism \[\CC ob_{s(3)}^{O}(4,-1) / \{ [\del \si_{\iv^{10}}], 
[\del \si_{\iv^{11}}] \} \to \Z_2 \oplus \Z \oplus \Z_8 \oplus \Z_4,\] which goes through
the fold cobordism group $\CC ob_{f}^{O}(4,-1)$.

By \cite{SaYa} the algebraic number of the fiber-singularity
$\si_{\iii^8}$ is equal to the signature of the source $4$-manifold. Since we
have already injective invariants
on the direct summand $\Z_2 \oplus \Z_4$, we obtain that
the fold cobordism group $\CC ob_{f}^{O}(4,-1)$ is isomorphic to
$\Z_2 \oplus \Z_4 \oplus \Z$.

\subsection{Generators of the cobordism group $\CC ob_{f}^{O}(4,-1)$}

By the previous proof, a system of generators consists of the cobordism classes $[f_1]$, $[f_4]$, and the cobordism class
of the fold map $f \co \CP^2 \# 2\overline{\CP^2} \to \R^3$ constructed in \cite{Sa} with one singular fiber of type $\iii^8$ and Boy's surface as the immersion of the indefinite fold singular set.
Let $\bb_1$, $\bb_2$ and $\bb_3$ denote the classes $[f_1]$, $[f_4]$ and $[\tilde f]$, respectively, where $\tilde f$
denotes the fold map obtained from $f \co \CP^2 \# 2\overline{\CP^2} \to \R^3$ by reversing the orientation of the source manifold $\CP^2 \# 2\overline{\CP^2}$.

The homomorphism $\de^{imm} \oplus \de^{tw} \oplus \iota^{imm} \oplus \iota^{tw}$ takes the values 
\[
\begin{pmatrix}
1 & 0 & 0\\
0 & 0 & 3\\ 
4 & 2 & 1\\
0 & x & y\\
\end{pmatrix}
\] 
on  the generators $\bb_1, \bb_2$ and $\bb_3$,
where the column vectors correspond to the generators $\bb_1, \bb_2, \bb_3$, and $x$ is a generator of $\Z_4$.
Hence, the homomorphism $$\de^{imm} \oplus \frac{c \circ \de^{imm} + \iota^{imm} - 
(\de^{tw}/3 \bmod 8)}{2} \oplus (\de^{tw}/3)$$ is an isomorphism between the groups $\CC ob_{f}^{O}(4,-1)$
and $\Z_2 \oplus \Z_4 \oplus \Z$, where $c \co \Z_2 \to \Z_8$ is the natural inclusion homomorphism.

Moreover, we can choose another homomorphism depending on $y$ as well, which detects the $\Z_4$ summand of 
the group $\CC ob_{f}^{O}(4,-1)$, namely the homomorphism
$\iota^{tw} - (k(\de^{tw}/3) \bmod 4)$, where $kx \equiv y \bmod 4$. Hence, for a fold map $g \co M^4 \to \R^3$ of a closed oriented $4$-manifold, we have 
$$
\frac{[f|_{S_f}] - (\si(M^4) \bmod 8)}{2} \equiv \iota^{tw}([f]) - k\si(M^4) \bmod 4.
$$
\end{proof}

\end{document}